\documentclass[11pt]{article}%
\usepackage{amsmath}
\usepackage{amsfonts}
\usepackage{amssymb}
\usepackage{graphicx}%
\setcounter{MaxMatrixCols}{30}
\providecommand{\U}[1]{\protect\rule{.1in}{.1in}}
\newtheorem{theorem}{Theorem}

\newenvironment{proof}[1][Proof]{\noindent\textbf{#1.} }{\ \rule{0.5em}{0.5em}}
\addtolength{\oddsidemargin}{-.9in}
\addtolength{\evensidemargin}{-.9in}
\addtolength{\textwidth}{1.8in}
\addtolength{\topmargin}{-1.6in}
\addtolength{\textheight}{2.6in}
\begin{document}

\title{Uniqueness of asymptotic cones of complete noncompact shrinking gradient Ricci
solitons with Ricci curvature decay\vspace{-0.15in}}
\author{Bennett Chow\thanks{Dept. of Math., UC San Diego, La Jolla, CA\ 92093}
\and Peng Lu\thanks{Dept. of Math., Univ. of Oregon, Eugene, OR 97403. P.\thinspace
L. is partially supported by a grant from Simons Foundation.}}
\date{}
\maketitle

\vspace{-0.25in}

Most of this paper follows \S 2 of Kotschwar and Wang \cite{KotschwarWang}.
Let $(\mathcal{M}^{n},\bar{g},\bar{f})$ be a complete noncompact shrinking
gradient Ricci soliton with $|\operatorname{Rc}_{\bar{g}}|(x)\rightarrow0$ as
$x\rightarrow\infty$. By Munteanu and Wang \cite{MunteanuWang5}, fixing
$p\in\mathcal{M}$, there is a constant $C$ such that $|\operatorname{Rm}%
_{\bar{g}}|(x)\leq C\bar{f}\left(  x\right)  ^{-1}\leq C\left(  d_{\bar{g}%
}(x,p)+1\right)  ^{-2}$ for $x\in\mathcal{M}$. Hence $|\nabla\bar{f}|^{2}%
=\bar{f}-R_{\bar{g}}\geq\bar{f}-\frac{a^{2}}{\bar{f}}$ for some constant $a$.
By the canonical form, for $t<1$ there exist diffeomorphisms $\varphi
_{t}:\mathcal{M}\rightarrow\mathcal{M}$, defined by $\frac{\partial}{\partial
t}\varphi_{t}\left(  x\right)  =\frac{1}{1-t}\left(  \nabla_{\bar{g}}\bar
{f}\right)  \left(  \varphi_{t}\left(  x\right)  \right)  $, $\varphi
_{0}=\operatorname{id}$, such that $g(t)=\left(  1-t\right)  \varphi_{t}%
^{\ast}\bar{g}$ solves Ricci flow and $f(x,t)\doteqdot\bar{f}\left(
\varphi_{t}\left(  x\right)  \right)  >0$ satisfies $\operatorname*{Rc}%
{}_{g(t)}+\nabla_{g(t)}^{2}f(t)-\frac{1}{2(1-t)}g(t)=0$ and\vspace{-0.09in}%
\begin{equation}
\frac{\partial f}{\partial t}(x,t)=\frac{1}{1-t}|\nabla_{\bar{g}}\bar{f}%
|^{2}\left(  \varphi_{t}\left(  x\right)  \right)  \geq\frac{1}{1-t}\left(
f(x,t)-\frac{a^{2}}{f(x,t)}\right)  .\vspace{-0.08in} \label{df dt equation}%
\end{equation}
Suppose $x$ satisfies $\bar{f}(x)\geq\frac{a}{\sqrt{1-\varepsilon^{2}}}$,
$\varepsilon>0$. By $\frac{f}{f^{2}-a^{2}}\frac{\partial f}{\partial t}%
\geq\frac{1}{1-t}$, $f(x,t)^{2}-a^{2}\geq\left(  1-t\right)  ^{-2}\left(
\bar{f}(x)^{2}-a^{2}\right)  $, so\vspace{-0.09in}%
\begin{equation}
f(x,t)\geq\left(  1-t\right)  ^{-1}(\bar{f}(x)^{2}-a^{2})^{1/2}\geq
\varepsilon\left(  1-t\right)  ^{-1}\bar{f}(x)\quad\text{for }t\in
\lbrack0,1).\vspace{-0.08in} \label{f bar phi t}%
\end{equation}
We have $|\operatorname{Rm}_{g(t)}|_{g(t)}(x)=(1-t)^{-1}\left\vert
\operatorname{Rm}_{\bar{g}}\right\vert _{\bar{g}}\left(  \varphi_{t}\left(
x\right)  \right)  \leq\frac{C}{\left(  1-t\right)  f(x,t)}\leq\frac
{C\sqrt{1-\varepsilon^{2}}}{a\varepsilon}$. By this uniform bound for
curvature, $\int_{0}^{1}\left\vert \frac{\partial}{\partial t}g\left(
x,t\right)  \right\vert _{g\left(  x,t\right)  }dt\leq\frac{C\sqrt
{1-\varepsilon^{2}}}{\varepsilon a}$, and Shi's local derivative of curvature
estimates, there exists a smooth metric $g_{1}$ on $\{\bar{f}>a\}$ such that
$g(t)$ converges to $g_{1}$ in $C^{\infty}$ on $\{\bar{f}\geq a+\varepsilon
\}$, for every $\varepsilon>0$.

Now $\frac{\partial f}{\partial t}\left(  x,t\right)  \leq\frac{1}%
{1-t}f\left(  x,t\right)  $ implies $h(x,t)\doteqdot\left(  1-t\right)
f\left(  x,t\right)  \leq\bar{f}\left(  x\right)  $. By $0\leq R_{\bar{g}%
}\left(  \varphi_{t}\left(  x\right)  \right)  \leq\frac{C\left(  1-t\right)
}{\varepsilon\bar{f}(x)}$ and\vspace{-0.1in}%
\begin{equation}
\frac{\partial h}{\partial t}(x,t)=-f\left(  x,t\right)  +|\nabla_{\bar{g}%
}\bar{f}|^{2}\left(  \varphi_{t}\left(  x\right)  \right)  =-R_{\bar{g}%
}\left(  \varphi_{t}\left(  x\right)  \right)  \quad\text{for }x\in\{\bar
{f}\geq\tfrac{a}{\sqrt{1-\varepsilon^{2}}}\}\text{ and }t\in\lbrack
0,1),\vspace{-0.09in}\label{ddt h of t}%
\end{equation}
we see that $h(t)$ converges in $C^{0}$ on $\{\bar{f}>a\}$ as $t\rightarrow1$
to a function $h_{1}$. By $(1-t)\operatorname*{Rc}{}_{g(t)}+\nabla_{g(t)}%
^{2}h(t)-\frac{1}{2}g(t)=0$ and elliptic theory, the convergence is in
$C^{\infty}$. Taking the limit of this equation as $t\rightarrow1$, we obtain
$\nabla_{g_{1}}^{2}h_{1}-\dfrac{1}{2}g_{1}=0$. Since $(1-t)^{2}R_{g(t)}%
+\left\vert \nabla h(t)\right\vert _{g(t)}^{2}=h(t)$, we have $\left\vert
\nabla h_{1}\right\vert _{g_{1}}^{2}=h_{1}$. Moreover, $\varepsilon\bar
{f}(x)\leq h_{1}(x)\leq\bar{f}\left(  x\right)  $. Since $\left\vert
\frac{\partial h}{\partial t}\right\vert \leq\frac{C\left(  1-t\right)
}{\varepsilon\bar{f}}$, we have $\left\vert h(x,t)-h_{1}\left(  x\right)
\right\vert \leq\frac{C\left(  1-t\right)  ^{2}}{\varepsilon\bar{f}(x)}$ on
$\{\bar{f}\geq\tfrac{a}{\sqrt{1-\varepsilon^{2}}}\}\times\lbrack0,1)$.

Define $\Omega=\{h_{1}>a\}\subset\mathcal{M}$. Taking $\varepsilon=\frac
{1}{\sqrt{2}}$, we get $\{\bar{f}>\sqrt{2}a\}\subset\Omega\subset\{\bar
{f}>a\}$. The function $\rho_{1}\doteqdot2\sqrt{h_{1}}$ on $\Omega$ satisfies
$\nabla_{g_{1}}^{2}\left(  \rho_{1}^{2}\right)  =2g_{1}$, $\left\vert
\nabla\rho_{1}\right\vert _{g_{1}}^{2}=1$, $\nabla^{g_{1}}(\rho_{1}^{2})$ is a
vector field generating a $1$-parameter family $\left\{  \varphi_{t}%
^{1}\right\}  _{t\in\lbrack0,\infty)}$ of homotheties of $(\Omega,g_{1})$ into
itself, the integral curves to $\nabla^{g_{1}}\rho_{1}$ are geodesics, and
there is a diffeomorphism between $\Omega$ and the product of $(2\sqrt
{a},\infty)$ and a compact manifold $\Sigma^{n-1}$ such that $g_{1}=d\rho
_{1}^{2}+\rho_{1}^{2}\tilde{g}_{1}$, where $\tilde{g}_{1}$ is a $C^{\infty}$
metric on $\Sigma$. This implies that $(\Omega,g_{1})$ extends to a regular
cone.$\vspace{-0.07in}$

\begin{theorem}
Any two asymptotic cones of a complete noncompact shrinking gradient Ricci
soliton with $|\operatorname{Rc}|(x)\rightarrow0$ as $x\rightarrow\infty$ are
isometric.$\vspace{-0.07in}$
\end{theorem}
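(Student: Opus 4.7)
The plan is to show that every asymptotic cone is isometric to a single model, namely the metric cone $C = C(\Sigma, \tilde g_1)$ obtained by completing $(\Omega, g_1)$ at an apex $o$. Since $C$ depends only on the soliton data $(\mathcal{M}, \bar g, \bar f)$, the theorem reduces to identifying an arbitrary Gromov-Hausdorff subsequential limit of $(\mathcal{M}, \lambda_k^{-2}\bar g, p)$, with $\lambda_k \to \infty$, with this $C$.

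Fix an auxiliary point $q \in \Omega$ with $h_1(q) = 1$, so $\rho_1(q) = 2 = d_C(o, q)$. Since $\varphi_t$ is an isometry $(\mathcal{M}, g(t)) \to (\mathcal{M}, (1-t)\bar g)$ and $g(t)|_\Omega \to g_1$ in $C^\infty_{\mathrm{loc}}$, one obtains pointed smooth Cheeger-Gromov convergence
\[
(\mathcal{M}, (1-t)\bar g, \varphi_t(q)) \longrightarrow (\Omega, g_1, q) \hookrightarrow C \setminus \{o\}
\]
on compact subsets of $C \setminus \{o\}$ as $t \to 1$. To relate this moving basepoint to the fixed $p$, note that $h(q,t) \to h_1(q) = 1$ combined with the Cao-Zhou asymptotic $d_{\bar g}(p, \cdot) = 2\sqrt{\bar f} + O(1)$ gives
\[
d_{(1-t)\bar g}(p, \varphi_t(q)) = (1-t)^{1/2}\, d_{\bar g}(p, \varphi_t(q)) \longrightarrow 2\sqrt{h_1(q)} = d_C(o, q),
\]
so at the level of distances $p$ plays the role of the cone apex.

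Finally, for any $\lambda_k \to \infty$, the rescalings $(\mathcal{M}, \lambda_k^{-2}\bar g, p)$ are pointed GH precompact by the Ricci lower bound, which becomes arbitrarily small under rescaling since $|\operatorname{Rc}_{\bar g}| \to 0$. Any subsequential limit $X$ contains $C \setminus \{o\}$ as a smooth open dense subset by the previous paragraph, while the bounded region $\{h_1 \leq a\} \subset \mathcal{M}$ collapses under rescaling to a point. Cheeger-Colding's volume-cone-implies-metric-cone theorem then forces $X$ to be a metric cone whose cross-section is determined by its smooth part, namely $(\Sigma, \tilde g_1)$, so $X \cong C$ and uniqueness follows. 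The main obstacle is this last step: bridging the smooth convergence on the end with a global GH identification that includes the apex. The apex arises from collapse of the compact core and ruling out spurious limit structure there requires Cheeger-Colding cone rigidity, with the explicit radial function $\rho_1 = 2\sqrt{h_1}$ supplied by the excerpt serving as the cone distance used to invoke it.
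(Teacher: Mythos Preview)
Your overall strategy---identify the smooth limit $(\Omega,g_1)$ of the self-similar rescalings with the end of any asymptotic cone---is the same as the paper's, but the execution differs in two places, and in both the paper is more direct.

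First, the basepoint. You pick an arbitrary $p$, track the moving point $\varphi_t(q)$, and invoke the Cao--Zhou asymptotic $d_{\bar g}(p,\cdot)=2\sqrt{\bar f}+O(1)$ to reconcile them. The paper instead chooses $O$ to be a minimum of $\bar f$; since $\nabla\bar f(O)=0$, one has $\varphi_t(O)=O$ for all $t$, so no basepoint drift occurs and no appeal to Cao--Zhou is needed. The identity $d_{\lambda_i^{-2}\bar g}(\varphi_{1-\lambda_i^{-2}}(x),O)=d_{g(1-\lambda_i^{-2})}(x,O)$ then matches the Cheeger--Gromov convergence of $g(t)\to g_1$ on $\Omega$ directly with the pointed Gromov--Hausdorff convergence of $(\mathcal{M},\lambda_i^{-2}\bar g,O)$.

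Second, the apex. You flag as the ``main obstacle'' the passage from smooth convergence on the end to a global GH identification including the vertex, and propose Cheeger--Colding cone rigidity to handle it. The paper avoids this entirely: it takes as given that an asymptotic cone $C$ is a metric cone (which follows from $|\operatorname{Rc}|\to 0$ and noncollapsing, or is simply part of the hypothesis), and then observes that the Cheeger--Gromov limit identifies $(\Omega,g_1)$ isometrically with the complement of a compact set in $C$. Since a cone is determined by its end, $C$ is independent of the sequence $\lambda_i$. Your invocation of ``volume-cone-implies-metric-cone'' is not quite the right Cheeger--Colding statement here; what you would actually use is that tangent cones at infinity of noncollapsed spaces with nonnegative Ricci in the limit are metric cones. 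That route works, but it is heavier machinery than the problem requires.
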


\begin{proof}
Let $O$ be a minimum point of $\bar{f}$, so that $\varphi_{t}(O)=O$. Suppose
that a Euclidean metric cone $\operatorname{C}$ is the pointed
Gromov--Hausdorff limit of $\left(  \mathcal{M},\lambda_{i}^{-1}d_{\bar{g}%
},O\right)  $ for some sequence $\lambda_{i}\rightarrow\infty$. Since
$g(1-\lambda_{i}^{-2})=\lambda_{i}^{-2}\varphi_{1-\lambda_{i}^{-2}}^{\ast}%
\bar{g}$ converges pointwise in $C^{\infty}$ on compact subsets of $\Omega$ to
$g_{1}$, we have that $(\varphi_{1-\lambda_{i}^{-2}}(\Omega),\lambda_{i}%
^{-2}\bar{g})$ converges in the $C^{\infty}$ Cheeger--Gromov sense using the
diffeomorphisms $\varphi_{1-\lambda_{i}^{-2}}$. Since\vspace{-0.09in}%
\[
d_{\lambda_{i}^{-2}\bar{g}}(\varphi_{1-\lambda_{i}^{-2}}(x),O)=d_{(\varphi
_{1-\lambda_{i}^{-2}}^{-1})^{\ast}g(1-\lambda_{i}^{-2})}(\varphi
_{1-\lambda_{i}^{-2}}(x),\varphi_{1-\lambda_{i}^{-2}}(O))=d_{g(1-\lambda
_{i}^{-2})}(x,O)\leq Cd_{\bar{g}}(x,O),\vspace{-0.1in}%
\]
the Cheeger--Gromov convergence matches with the pointed Gromov--Hausdorff
convergence. We obtain $(\Omega,g_{1})$ is isometric to the complement of a
compact set in $\operatorname{C}$.\thinspace So $\operatorname{C}$ is
independent of the choice of $\lambda_{i}$.\vspace{-0.15in}
\end{proof}

\end{document}